\newcommand{\IN}{\mathbb{N}}
\newcommand{\IZ}{\mathbb{Z}}
\newcommand{\IQ}{\mathbb{Q}}
\newcommand{\IR}{\mathbb{R}}
\newcommand{\IC}{\mathbb{C}}
\newcommand{\IF}{\mathbb{F}}
\newcommand{\Mm}{\mathcal{M}}
\newcommand{\Ll}{\mathcal{L}}
\DeclareMathOperator{\ring}{ring}
\DeclareMathOperator{\Th}{Th}
\newcommand{\laurent}[1]{(\!(#1)\!)}
\theoremstyle{plain}
\newtheorem{thm}{Theorem}
\newtheorem*{thm*}{Theorem}
\newtheorem{prop}[thm]{Proposition}
\newtheorem{lem}[thm]{Lemma}
\theoremstyle{definition}
\newtheorem{defi}[thm]{Definition}
\newtheorem{rem}[thm]{Remark}
\newtheorem{expl}[thm]{Example}
\title{Undecidability of expansions of Laurent series fields by cyclic discrete subgroups}
\author{Leo Gitin}
\institute{
  University of Oxford,
  Oxford, U.K.\\
  \email{leo.gitin@maths.ox.ac.uk}
 }
\authorrunning{Leo Gitin}
\titlerunning{Undecidability of expansions of Laurent series fields}
\begin{document}

\maketitle

\begin{abstract}
In 1987, Pheidas showed that the field of Laurent series $\IF_q\laurent{t}$ with a constant for the indeterminate $t$ and a predicate for the natural powers $\{t^n \mid n > 0\}$ of $t$ is existentially undecidable. We show that the same result holds true if $t$ is replaced by any element $\alpha$ of positive $t$-adic valuation.
\end{abstract}

\paragraph{Introduction.}
\label{sect:introduction}

Hilbert's Tenth Problem asks for an \textit{algorithm} that, given a polynomial $f(X_1, \ldots, X_n)$ with integer coefficients, will determine whether or not it has a root in integers $\IZ$, see \cite{Hilbert24,Hilbert24-english}.
Building on previous work by Robinson, Davis, and Putnam, Matiyasevich famously showed that no such algorithm exists \cite{book:Matiyasevich}. Hilbert's Tenth Problem can be equivalently phrased as asking whether or not the positive existential theory $\Th_{\exists^+}(\IZ)$ in the first-order language of rings $\Ll_{\ring} = \{0,1,+,\cdot\}$ is decidable \cite[1.1]{survey:Koenigsmann} (in what follows, we will omit the symbols of $\Ll_{\ring}$ when speaking of ring structures). In the context of model theory, it is both natural to consider other structures $\Mm$ that may differ from $\IZ$ and to extend the family of sentences that we look at (e.g. the existential theory $\Th_{\exists}(\Mm)$ or the entire theory $\Th(\Mm)$). Many classical results in logic and model theory subsume answers to decidability questions.

Before Matiyasevich's negative solution to Hilbert's Tenth Problem, it was already known by Gödel's work on his Incompleteness Theorems \cite{Goedel} that the full first-order $\Ll_{\ring}$-theory $\Th(\IZ)$ is undecidable. In the 1930s and 1950s, Tarski \cite{Tarski31, Tarski51} determined the $\Ll_{\ring}$-theories of the real and complex fields $\IR$, $\IC$ (the archimedean local fields) and consequently showed that both are decidable. Ax and Kochen \cite{paper:Ax-Kochen-I} studied the model theory of non-archimedean local fields, i.e., $p$-adic fields $K$ (finite field extensions of the $p$-adic numbers $\IQ_p$) and Laurent series fields
\[
    \IF_q\laurent{t} = \left\{\sum_{i = -k}^{\infty} a_i t^i\, \Big|\, a_i \in \IF_q,\, k \in \IZ \right\}
\]
over finite fields $\IF_q$ with $q = p^n$ elements, $p$ a prime number. It follows from their work that the theory $\Th(K)$ of any $p$-adic field $K$ is decidable. Whether or not the Laurent series fields are decidable, is a major open question in the model theory of valued fields. In 2016, Anscombe and Fehm \cite{Anscombe-Fehm} made substantial progress towards this question by proving the decidability of the existential theory $\Th_{\exists}(\IF_q\laurent{t})$ of Laurent series fields. For other recent results in this direction, we refer to Anscombe, Dittmann, and Fehm \cite{Anscombe-Dittmann-Fehm,Dittmann-Fehm}. 

It is natural to consider the structures mentioned above in expansions of the language of rings. Van den Dries \cite{vdDries85} considered the real ordered field with a new predicate for $2^{\IZ}$, the cyclic multiplicative subgroup generated by 2. He proves the surprising result that $(\IR,2^{\IZ})$ is decidable by showing quantifier elimination in a natural expansion of $(\IR,2^{\IZ})$. This still holds if 2 is replaced by a recursive real number $\alpha > 1$. In the same paper, van den Dries asks if his results can be generalised to the structure $(\IR,2^{\IZ},3^{\IZ})$. In 2010, Hieronymi \cite{Hieronymi} gave a negative answer: for two real numbers $\alpha, \beta > 1$ satisfying $\alpha^{\IZ} \cap \beta^{\IZ} = \{1\}$, the theory $\Th(\IR,\alpha^{\IZ},\beta^{\IZ})$ is undecidable. Expansions of $\IQ_p$ by discrete cyclic (multiplicative) subgroups have been studied by Mariaule \cite{Mariaule_dec,Mariaule_undec}. He proves that for $\alpha \in \IQ_p$ of positive $p$-adic valuation $v_p(\alpha) > 0$, the theory $\Th(\IQ_p,\alpha^{\IZ})$ is decidable, whereas $\Th(\IQ_p,\alpha^{\IZ},\beta^{\IZ})$ is undecidable whenever $v_p(\beta) > 0$ and $\alpha^{\IZ} \cap \beta^{\IZ} = \{1\}$.
Ax already knew (unpublished) that $\Th(\IF_q\laurent{t},t^{\IZ})$ is undecidable. An elementary proof was given by Becker, Denef, and Lipshitz \cite{Becker-Denef-Lip}. Later,
a considerable strengthening was obtained by Pheidas \cite{Pheidas}. This is particularly interesting, as not much is known about these fields from the point of view of (un)decidability. He shows:
\begin{thm*}[Pheidas] \label{intro:pheidas_result}
Let $P = \{t^n \mid n > 0\}$ be the set of powers of the indeterminate $t$. Then $\Th_{\exists}(\IF_q\laurent{t}, t, P)$ is undecidable.
\end{thm*}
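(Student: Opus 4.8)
The plan is to reduce to Matiyasevich's theorem by giving a positive-existential interpretation of the structure $(\IN; +, \cdot)$, whose positive existential theory is undecidable, into $(\IF_q\laurent{t}, t, P)$. I encode a natural number $n$ by the power $t^n$, so that the domain of the interpretation is the existentially definable set $D = P \cup \{1\}$ and $n \mapsto t^n$ becomes a bijection $\IN \to D$ (equivalently, $n$ is recovered as the $t$-adic valuation of $t^n$). Under this encoding addition is immediate: since $t^m \cdot t^n = t^{m+n}$, the graph of $+$ is just
\[
  \{(x,y,z) \in D^3 \mid z = x \cdot y\},
\]
which is quantifier-free definable using only field multiplication and the predicate $P$. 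The one remaining ingredient is therefore a positive-existential definition of the graph of multiplication, $M = \{(t^a, t^b, t^{ab}) \mid a,b \geq 1\}$.

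The crux is this definition of $M$, and I would build it in two stages. First I would capture the divisibility relation on exponents by means of geometric series: for $u = t^a \in P$ the element $y$ determined by $y\,(1 - t^a) = 1$ is $y = \sum_{k \geq 0} t^{ak}$, whose support is exactly the set of multiples of $a$. Thus ``$a$ divides $c$'' becomes ``$t^c$ lies in the support of $y$'', and I would turn support membership into an existential condition by localising the coefficient at $t^c$ — writing $y = y_0 + \lambda t^c + y_1$ with $\lambda \in \IF_q^{\times}$, with $y_1$ lying in a high power of the maximal ideal relative to $t^c$ and $y_0$ supported strictly below $c$ — which in turn rests on existential definitions of the valuation ring $\mathcal{O} = \IF_q[[t]]$ and its maximal ideal. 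A useful piece of extra structure here is the Frobenius $x \mapsto x^p$, which sends $t^n \mapsto t^{pn}$ and hence realises multiplication of exponents by powers of $p$ by a pure ring operation. Second, from addition together with divisibility one recovers full multiplication: either by recasting the classical definability of $\cdot$ from $(+, \mid)$ over $\IN$ in existential form, or via the identity $ab = \tfrac12\big((a+b)^2 - a^2 - b^2\big)$, reducing to a positive-existential definition of the squaring graph $\{(t^a, t^{a^2})\}$.

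The main obstacle I expect is keeping every one of these definitions strictly within the positive existential fragment: support membership, divisibility, and finally multiplication must be expressed without genuine universal quantifiers or negations, since only then does undecidability transfer to $\Th_{\exists}(\IF_q\laurent{t}, t, P)$ rather than merely to the full theory. Secondary care is needed for characteristic-$p$ phenomena — coefficients live in the finite field $\IF_q$ (so $\IF_q^{\times}$ is existentially pinned down by $\lambda^{q-1} = 1$, but coefficient arithmetic carries no size information), and the halving in the squares identity must be handled separately when $p = 2$. Once the graphs of $+$ and $\cdot$ are positive-existentially defined on the copy $D$ of $\IN$, any finite system of Diophantine equations over $\IN$ translates into a single existential $\Ll_{\ring} \cup \{t, P\}$-sentence that holds in $\IF_q\laurent{t}$ if and only if the system is solvable; Matiyasevich's theorem then forces $\Th_{\exists}(\IF_q\laurent{t}, t, P)$ to be undecidable.
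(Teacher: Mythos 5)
Your overall architecture (encode $n$ as $t^n$ on the domain $P \cup \{1\}$, get addition for free from the quantifier-free condition $z = xy$, and reduce to Matiyasevich via an existential interpretation) matches the paper's, but the proof breaks at its crux, and the break is not a fixable technicality: no positive-existential definition of the multiplication (or squaring) graph from addition and ordinary divisibility can exist. By a theorem of Lipshitz (The Diophantine problem for addition and divisibility, Trans.\ Amer.\ Math.\ Soc.\ 235 (1978); independently Bel'tyukov), the positive existential theory of $(\IN, 0, 1, +, \mid)$ is \emph{decidable}. Julia Robinson's classical definition of $\cdot$ from $+$ and $\mid$ is genuinely universal--existential, and Lipshitz's theorem shows this is unavoidable: if the multiplication or squaring graph were positive-existentially definable there, that theory would be undecidable by MRDP. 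So even granting your first stage in full --- an existential definition of $\{(t^a, t^c) : a \mid c\}$ --- you would only have interpreted a decidable structure, and no undecidability transfers. The identity $ab = \frac{1}{2}\bigl((a+b)^2 - a^2 - b^2\bigr)$ changes nothing, since squaring is the same obstacle (and the $p = 2$ caveat is the least of the issues).

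The first stage is also gapped as written: in the decomposition $y = y_0 + \lambda t^c + y_1$, the clause ``$y_0$ supported strictly below $c$'' is not an existential condition --- the valuation $v_t$ sees only the bottom of the support, so upper support bounds and truncations are invisible, and without that clause every $y$ decomposes trivially (take $y_0 = y - t^c$). Equivalently, your test amounts to deciding whether $y(1 - t^c)$ is a polynomial ($a \mid c$) or an infinite series ($a \nmid c$), and finiteness of support is not existentially expressible either. The paper's (Pheidas') resolution is exactly the characteristic-$p$ idea you brushed against with the Frobenius remark but did not develop: replace full divisibility by $p$-divisibility, $n \mid_p m \Leftrightarrow \exists k\ m = np^k$. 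This restricted relation is what the Frobenius/Artin--Schreier structure makes existentially accessible: $n \mid_p m$ iff $m \ge n$ and $\exists a\ t^{-m} - t^{-n} = a^p - a$ (Lemma \ref{lem:p_div_coding} with $\alpha = t$), the witness being the telescoping sum $t^{-np^{k-1}} + \cdots + t^{-n}$, while the converse direction is ruled out by a valuation computation. Crucially, and in contrast to $(\IN, +, \mid)$, the structure $(\IN, 0, 1, +, \mid_p)$ \emph{does} admit an existential definition of multiplication (Denef, Pheidas; step (\ref{idea:1}) of the paper), so its existential theory is undecidable by MRDP and undecidability does transfer. Repairing your proof therefore means replacing $\mid$ with $\mid_p$ and the geometric-series support test with the Artin--Schreier equation.
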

Note that by virtue of Anscombe and Koenigsmann \cite{Anscombe-Koenigsmann}, who show that the valuation ring $\IF_q\llbracket t \rrbracket$ in $\IF_q\laurent{t}$ is existentially $\Ll_{\ring}$-definable without parameters, it follows moreover that $\Th_{\exists}(\IF_q\laurent{t}, t, t^{\IZ})$ is undecidable (observe that $t^{\IZ} \cap \IF_q\llbracket t \rrbracket = P \cup \{1\}$). We generalise this theorem to arbitrary cyclic discrete subgroups of $\IF_q\laurent{t}$, i.e., subgroups generated by an element $\alpha$ of positive $t$-adic valuation $v_t(\alpha)$.
\begin{thm*}
Let $\alpha \in \IF_q\laurent{t}$ be an element with $v_t(\alpha) > 0$. Then the existential theory of the structure $(\IF_q\laurent{t},\alpha,\alpha^{\IZ})$ is undecidable.
\end{thm*}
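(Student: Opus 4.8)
The plan is to reduce the claim to Pheidas's theorem in two stages. First I would pass from the group predicate $\alpha^{\IZ}$ to the semigroup of positive powers $P_\alpha = \{\alpha^m \mid m > 0\}$. By Anscombe--Koenigsmann the valuation ring $\mathcal{O} = \IF_q\llbracket t \rrbracket$ is existentially $\Ll_{\ring}$-definable, so the relation $v_t(x) \geq v_t(y)$ is existentially definable via $\exists z\,(z \in \mathcal{O} \wedge x = zy)$. Since $v_t(\alpha^k) = k\, v_t(\alpha)$ and $v_t(\alpha)>0$, an element $y \in \alpha^{\IZ}$ is a positive power of $\alpha$ exactly when $y/\alpha \in \mathcal{O}$; hence $P_\alpha = \{y \in \alpha^{\IZ} \mid \exists z\,(z \in \mathcal{O} \wedge y = z\alpha)\}$ is existentially definable in $(\IF_q\laurent{t}, \alpha, \alpha^{\IZ})$. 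It therefore suffices to prove that $\Th_{\exists}(\IF_q\laurent{t}, \alpha, P_\alpha)$ is undecidable.

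The second stage is an existential interpretation of a Pheidas structure inside $(\IF_q\laurent{t}, \alpha, P_\alpha)$. The key observation is that the subfield $\IF_q\laurent{\alpha}$ is abstractly a Laurent series field: the $\IF_q$-isomorphism $\IF_q\laurent{X} \to \IF_q\laurent{\alpha}$ sending $X \mapsto \alpha$ carries the constant $X$ to $\alpha$ and the predicate $\{X^m \mid m > 0\}$ to $P_\alpha$. Consequently Pheidas's theorem, with the indeterminate renamed $X$, shows that $\Th_{\exists}(\IF_q\laurent{\alpha}, \alpha, P_\alpha)$ is already undecidable. If I can existentially define, with the parameter $\alpha$, a copy of the subfield $\IF_q\laurent{\alpha}$ inside $\IF_q\laurent{t}$, then relativising the existential quantifiers of Pheidas's sentences to this definable set yields existential $\Ll_{\ring} \cup \{\alpha, P_\alpha\}$-sentences over the full field; since the defining formula is itself existential the translation stays existential, and its correctness transfers undecidability to $(\IF_q\laurent{t}, \alpha, P_\alpha)$, as required.

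The hard part is this last interpretation, and already the pure-power case $\alpha = t^n$ exhibits it: one must existentially define the proper subfield $\IF_q\laurent{t^n}$ inside $\IF_q\laurent{t}$, whereas the obvious fixed-field description $\{x \mid \sigma(x) = x\}$ for a generator $\sigma$ of $\mathrm{Gal}(\IF_q\laurent{t}/\IF_q\laurent{t^n})$ is universal rather than existential. Writing $n = p^e m$ with $p \nmid m$, the $p$-part is harmless: the image of the $e$-fold Frobenius $x \mapsto x^{p^e}$ is exactly $\IF_q\laurent{t^{p^e}}$, which is thus positively existentially definable, and that map preserves $P_\alpha$. The genuine obstacle is the tamely ramified part together with the unit factor of $\alpha$; in general $\alpha$ is not an $n$-th power, so there need be no uniformizer $s$ with $s^n = \alpha$, and one cannot simply take roots. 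Overcoming this --- producing an existential definition of the relevant Laurent series subfield that uses $P_\alpha$ and the existentially definable valuation ring in an essential way, or, failing a clean interpretation, re-running Pheidas's coding of undecidable arithmetic directly with $\alpha$ in place of $t$ (the exponent arithmetic is unchanged, as $\alpha^a\alpha^b = \alpha^{a+b}$ and $(\alpha^a)^p = \alpha^{pa}$, and only the steps that previously exploited $v_t(t) = 1$ must be redone via Anscombe--Koenigsmann) --- is where the real work lies.
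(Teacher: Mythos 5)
Your first stage is sound and coincides with the paper's: the paper carves out the positive powers as $\alpha^{\IZ} \cap \IF_q\llbracket t \rrbracket \setminus \{1\}$ via Anscombe--Koenigsmann, and your variant $\{y \in \alpha^{\IZ} \mid y/\alpha \in \IF_q\llbracket t \rrbracket\}$ works just as well. The gap is that your second stage --- which carries all the mathematical content --- is never completed, and the specific reduction you propose is unworkable. You reduce the theorem to an existential definition of the subfield $\IF_q\laurent{\alpha}$ inside $\IF_q\laurent{t}$; no such definition is known, the paper never establishes one, and your own observation that the natural fixed-field descriptions are universal is exactly the difficulty. Worse, you cannot simply skip the relativisation and let Pheidas' sentences run over the big field: the truth of the Artin--Schreier coding genuinely changes when the witness $a$ ranges over $\IF_q\laurent{t}$ rather than $\IF_q\laurent{\alpha}$. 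The paper's Example \ref{expl:counter} exhibits precisely this failure: for $\alpha = (t^{-3}+1+t+t^2)^{-1}$ in $\IF_3\laurent{t}$, the equation $\alpha^{-2} - \alpha^{-1} = a^3 - a$ has a solution $a \in \IF_3\laurent{t}$ although $1 \mid_3 2$ fails (whereas it has no solution in $\IF_3\laurent{\alpha}$, by Pheidas applied to that field). So relativisation is unsound without the definable subfield, and the definable subfield is out of reach.

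Your fallback --- ``re-running Pheidas's coding directly with $\alpha$ in place of $t$'' --- is indeed the paper's route, but it is not the routine patch you suggest, and Anscombe--Koenigsmann plays no role in it (the paper uses that result only for your stage one). Write $\alpha = \beta^{p^k}$ with $\beta$ not a $p^{\text{th}}$ power. If $p \nmid v_t(\beta)$, Pheidas' argument goes through verbatim (Lemma \ref{lem:p_div_coding} and Remark \ref{rem:power_of_alpha}): the crucial step is that $v_t(d^p - d) = pv_t(d)$ is divisible by $p$ while $-v_t(\alpha)\max\{m_0,n_0\}$ is not, which only needs $p \nmid v_t(\alpha)$, not $v_t(t) = 1$. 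If $p \mid v_t(\beta)$, that step collapses and, by the counterexample above, the naive characterisation of $\mid_p$ is simply false. The paper's repair is a genuinely new idea your proposal lacks: the ``$p^{\text{th}}$-powers-omitting valuation'' $\hat v_t$ (Definition \ref{def:p_omit_val}), the computation $\hat v_t(\alpha^N) = (N-1)v_t(\alpha) + \hat v_t(\alpha)$ for $p \nmid N$ (Lemma \ref{lem:v-hat_t_of_powers}), and the rescaled coding $\alpha^{-mN} - \alpha^{-nN} = a^p - a$ with $N > \hat v_t(\beta^{-1})/v_t(\beta) + 1$ and $p \nmid N$ (Lemma \ref{lem:p_div_coding2}, Proposition \ref{cor:p_div_coding_final}). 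You correctly localised where Pheidas' proof uses $v_t(t) = 1$, but without this replacement invariant and the exponent rescaling, your argument does not go through.
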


See Remark \ref{rem:generalise} for a more general formulation. Another way of viewing $\alpha^{\IZ}$ is to think of it as the image of a homomorphism from the value group $\IZ$ into the multiplicative group $\IF_q\laurent{t}^{\times}$. When $v_t(\alpha) = 1$, such a homomorphism is called a cross-section.

\paragraph{Pheidas' work.}

Pheidas proves his theorem in two steps. His key tool is the following (somewhat unusual) relation on natural numbers that goes back to Denef \cite{Denef} and is sometimes called \textit{$p$-divisibility}. We write
\[
    n \mid_p m \text{ if and only if } \exists k \in \IN\ m = n \cdot p^k.
\]
His proof now proceeds as follows.
\begin{enumerate}[(I)]
    \item Prove that $\Th_{\exists}(\IN,0,1,+,\mid_p)$ is undecidable by giving an existential definition of multiplication in this structure and invoking the Matiyasevich/MRDP theorem. \label{idea:1}
    \item Show that the relation $n \mid_p m$ can be effectively coded in $\IF_q\laurent{t}$ by an existential formula via $P = \{t^n \mid n > 0\}$. \label{idea:2}
\end{enumerate}
To generalise from $t$ to arbitrary $\alpha$, we precisely follow Pheidas' strategy. The main content of this note is to explain how Pheidas' coding needs to be modified in this more general context.

Essential to the coding is the unique arithmetic of $\IF_q\laurent{t}$.

\begin{rem}
In characteristic $p$, both the Frobenius map $x \longmapsto x^p$ and the Artin-Schreier map $x \longmapsto x^p - x$ are additive. Moreover, the Frobenius map is an automorphism on the finite field $\IF_q$ and a non-surjective monomorphism on $\IF_q\laurent{t}$ with image
\[
    \IF_q\laurent{t^p} = \left\{\sum_{i = -k}^{\infty} a_{pi} t^{pi}\, \Big|\, a_{pi} \in \IF_q,\, k \in \IZ \right\}.
\]
This is the field of $p^{\text{th}}$ powers in $\IF_q\laurent{t}$.
\end{rem}

\begin{lem} \label{lem:p_div_coding}
Fix an element $\alpha \in \IF_q\laurent{t}$ with $v_t(\alpha) > 0$ not divisible by $p$. We can characterise the relation $n \mid_p m$ for natural $m, n > 0$ as follows:
\begin{equation} \label{eq:p_div_coding}
    n \mid_p m \quad \text{if and only if} \quad m \ge n \wedge \exists a \in \IF_q\laurent{t}\ \alpha^{-m} - \alpha^{-n} = a^p - a.
\end{equation}
\end{lem}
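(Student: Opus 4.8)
The plan is to prove the equivalence in Lemma \ref{lem:p_div_coding} by exploiting the additivity of the Artin--Schreier map and a careful analysis of $t$-adic valuations. The key object on the right-hand side is the expression $\alpha^{-m} - \alpha^{-n}$, and the assertion is that this lies in the image of the Artin--Schreier map $x \mapsto x^p - x$ exactly when $m = n \cdot p^k$ for some $k \ge 0$ (together with the side condition $m \ge n$, which just records that $k \ge 0$).

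First I would set up the valuation bookkeeping. Since $v_t(\alpha) > 0$, the elements $\alpha^{-m}$ and $\alpha^{-n}$ have strictly negative valuation, with $\alpha^{-m}$ having the more negative valuation precisely when $m > n$; so $v_t(\alpha^{-m} - \alpha^{-n}) = -m \cdot v_t(\alpha)$ when $m > n$, and the difference vanishes when $m = n$. The crucial structural fact I would use is a description of the image of the Artin--Schreier map on $\IF_q\laurent{t}$ in terms of the principal parts: an element $f$ lies in $\{a^p - a \mid a \in \IF_q\laurent{t}\}$ if and only if its principal part (the sum of its terms of negative valuation) can be written as $g^p - g$ for some polynomial $g$ in $t^{-1}$, equivalently if and only if every coefficient of a term $t^{-j}$ with $p \mid j$ can be absorbed. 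Concretely, because raising to the $p$-th power multiplies exponents by $p$, the map $a \mapsto a^p - a$ can hit a term $c\,t^{-j}$ with negative exponent only when $j$ is the image $p \cdot j'$ of a smaller exponent $j'$, feeding backwards to the "leading" obstruction in exponents not divisible by $p$. This reduces solvability of $\alpha^{-m} - \alpha^{-n} = a^p - a$ to a condition relating the valuations $m \cdot v_t(\alpha)$ and $n \cdot v_t(\alpha)$ modulo the $p$-power scaling.

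With this in hand, the forward direction is the computational heart: if $m = n p^k$, I would exhibit an explicit witness $a$. The natural candidate is a telescoping sum
\[
    a = \alpha^{-n p^{k-1}} + \alpha^{-n p^{k-2}} + \cdots + \alpha^{-n p} + \alpha^{-n},
\]
since then $a^p = \alpha^{-n p^k} + \cdots + \alpha^{-n p}$ (using that Frobenius is additive, so $a^p = \sum \alpha^{-n p^{i+1}}$), and subtracting $a$ telescopes to leave exactly $\alpha^{-n p^k} - \alpha^{-n} = \alpha^{-m} - \alpha^{-n}$. For the converse, I would argue contrapositively: assuming $m \ge n$ but $m \ne n p^k$ for any $k$, I would show no solution $a$ exists by tracking valuations and using the hypothesis that $v_t(\alpha)$ is \emph{not divisible by $p$}. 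This hypothesis is what guarantees that the exponent $m \cdot v_t(\alpha)$ is divisible by $p$ to the correct order only when $m$ itself carries the appropriate power of $p$ relative to $n$, so that the chain of cancellations forced by the Artin--Schreier map closes up precisely in the $m \mid_p n$ case.

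The main obstacle I anticipate is the converse direction, specifically the valuation-theoretic argument showing non-solvability when $m$ is not a $p$-power multiple of $n$. The delicate point is that the Artin--Schreier map interacts with the whole infinite tail of a Laurent series, not just its leading term, so I cannot simply compare leading coefficients; I would need to argue that the negative-exponent part of any putative solution is forced, term by term, into a rigid form determined by the two prescribed terms $\alpha^{-m}$ and $\alpha^{-n}$, and that the condition $p \nmid v_t(\alpha)$ prevents spurious solutions by ensuring the relevant exponents avoid accidental divisibility by $p$. Making this rigidity precise — likely by reducing modulo $\IF_q\llbracket t \rrbracket$ and analysing the induced map on principal parts, or equivalently by an induction on $k$ peeling off one Frobenius layer at a time — is where the real work lies.
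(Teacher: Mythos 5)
Your forward direction is correct and coincides with the paper's: the telescoping witness $a = \alpha^{-np^{k-1}} + \alpha^{-np^{k-2}} + \cdots + \alpha^{-n}$ together with the additivity of Frobenius gives $a^p - a = \alpha^{-m} - \alpha^{-n}$ when $m = np^k$.

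The converse, however, has a genuine gap, located exactly where you say ``the real work lies'' --- and you never close it. The concrete obstruction is that a direct valuation comparison on the given equation fails whenever $p \mid m$: if $m > n$ then $v_t(\alpha^{-m} - \alpha^{-n}) = -m\,v_t(\alpha)$, so a solution $a$ (necessarily with $v_t(a) < 0$) satisfies $p\,v_t(a) = -m\,v_t(\alpha)$, which is perfectly consistent when $p \mid m$; the hypothesis $p \nmid v_t(\alpha)$ produces a contradiction only when $p \nmid m$. Since an $m$ that is not of the form $np^k$ can certainly be divisible by $p$ (e.g.\ $n = 1$, $m = 2p$), ``tracking valuations'' of the original equation cannot finish the proof, and your claim that solvability ``reduces to a condition relating the valuations $m \cdot v_t(\alpha)$ and $n \cdot v_t(\alpha)$'' is not substantiated. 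The paper's key move, absent from your sketch, is to \emph{reuse the already-proved forward direction}: writing $m = m_0 p^{v_p(m)}$, $n = n_0 p^{v_p(n)}$ with $p \nmid m_0 n_0$, one obtains witnesses $b, c$ with $\alpha^{-m} - \alpha^{-m_0} = b^p - b$ and $\alpha^{-n} - \alpha^{-n_0} = c^p - c$; then $d = a - b + c$ satisfies $\alpha^{-m_0} - \alpha^{-n_0} = d^p - d$, and only now does the leading-term argument bite: if $m_0 \ne n_0$, then $p\,v_t(d) = -v_t(\alpha)\max\{m_0, n_0\}$, whose right-hand side is prime to $p$, a contradiction; while $m_0 = n_0$ together with $m \ge n$ yields $n \mid_p m$.

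Your alternative suggestion of ``peeling off one Frobenius layer at a time'' could in fact be completed (whenever $p$ divides $\max\{m', n'\}$, subtract the appropriate $\alpha^{-m'/p}$ or $\alpha^{-n'/p}$ from the putative solution and iterate), and is essentially a step-by-step version of the paper's one-shot reduction; but as written it is a named strategy, not an argument. Separately, your structural claim about the Artin--Schreier image is imprecise: membership of $f$ in $\{a^p - a \mid a \in \IF_q\laurent{t}\}$ is \emph{not} determined by the principal part alone, since there is an obstruction at the constant term (solvability of $x^p - x = c$ in $\IF_q$, a trace condition), and $\alpha^{-m}$ genuinely has a nonzero regular part for general $\alpha$. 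This imprecision happens to be harmless for the lemma, because the correct argument only ever compares the single leading term of negative valuation, but in a finished proof it would need to be either repaired or avoided as the paper does.
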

\begin{proof}
Pheidas' proof \cite[Lem. 1]{Pheidas} for $\alpha = t$ goes through in this case. We will use this opportunity to show his beautiful argument.

Assume $n \mid_p m$ holds such that $m = n \cdot p^k$ for some $k \in \IN$. In that case, the element
\[
    a = \alpha^{-n p^{k - 1}} + \alpha^{-n p^{k - 2}} + \ldots + \alpha^{-n}
\]
witnesses the right-hand side of (\ref{eq:p_div_coding}). Conversely, assume that for positive integers $m \ge n$, there exists $a \in \IF_q\laurent{t}$ satisfying $\alpha^{-m} - \alpha^{-n} = a^p - a$. Write $m = m_0 p^{v_p(m)}$ and $n = n_0 p^{v_p(n)}$, where both $m_0, n_0 > 0$ are not divisible by $p$. By the first part of the proof, we can find $b, c \in \IF_q\laurent{t}$ with
\begin{align*}
    \alpha^{-m} - \alpha^{-m_0} & = b^p - b \\
    \alpha^{-n} - \alpha^{-n_0} & = c^p - c.
\end{align*}
Setting $d = a - b + c$, we can combine these three equations to $\alpha^{-m_0} - \alpha^{-n_0} = d^p - d$. If $m_0 = n_0$, we are done since $m \ge n$. Otherwise, we may assume $m_0 \ne n_0$, in which case
\[
    v_t(d^p - d) = v_t(\alpha^{-m_0} - \alpha^{-n_0}) = -v_t(\alpha)\max\{m_0,n_0\}.
\]
We know $v_t(d) < 0$ implies that $v_t(d^p - d)$ is divisible by $p$, which is in contradiction to our assumptions that $v_t(\alpha)$, $m_0$, $n_0$ are not divisible by $p$.
\end{proof}

\begin{rem} \label{rem:power_of_alpha}
Note that (\ref{eq:p_div_coding}) still holds in the case when we can write $\alpha = \beta^{p^k}$, $k \in \IN$, where $v_t(\beta)$ is not divisible by $p$. Indeed, for $m \ge n$, we have
\[
    \exists a \in \IF_q\laurent{t}\ \alpha^{-m} - \alpha^{-n} = \beta^{-m p^k} - \beta^{-n p^k} = a^p - a
\]
iff $n p^k \mid_p m p^k$ iff and only if $n \mid_p m$.
\end{rem}

\paragraph{The general case.}

This characterisation of $\mid_p$ given by (\ref{eq:p_div_coding}) will not work for all possible values of $\alpha$, as we can see by the following counterexample.
\begin{expl} \label{expl:counter}
Consider $p = q = 3$, i.e., the local field $\IF_3\laurent{t}$ and the element
\[
    \alpha = (t^{-3} + 1 + t + t^2)^{-1}
\]
with $v_t(\alpha) = 3$ divisible by $p = 3$. Then $\alpha^{-2} - \alpha^{-1} = a^3 - a$ has a solution in $\IF_3\laurent{t}$,
\[
    a = t^{-2} + t^{-1} - t + t^2 + \sum_{i \ge 0} (-1)^i (-t^{4 \cdot 3^i} + t^{6 \cdot 3^i}),
\]
but the relation $1 \mid_3 2$ does not hold.
\end{expl}
\renewcommand*{\thefootnote}{\fnsymbol{footnote}}
Hence a new observation is needed. For this purpose, we define the following unusual function, which we call the ``$p^{\text{th}}$-powers-omitting $t$-adic valuation'' for lack of a better name.\footnote{Note that, strictly speaking, $\hat v_t$ is not a valuation on $\IF_q\laurent{t}$: it does not satisfy $x = 0 \Longleftrightarrow \hat v_t(x) = \infty$ and it is also not a group homomorphism.}
\renewcommand*{\thefootnote}{\arabic{footnote}}
\begin{defi} \label{def:p_omit_val}
Given $x \in \IF_q\laurent{t}$, written as a Laurent series
\[
    x = \sum_{i = -k}^{\infty} a_i t^i,
\]
define $\hat v_t(x)$ to be the integer
\[
    \hat v_t(x) = \min\{i \mid a_i \ne 0 \wedge p \nmid i\},
\]
and $\hat v_t(x) = \infty$ if this minimum does not exist, i.e., if $x \in \IF_q\laurent{t^p}$.
\end{defi}
Curiously, it captures exactly the kind of algebraic-combinatorial behaviour of $\IF_q\laurent{t}$ that becomes invisible to $v_t$.

\begin{lem} \label{lem:v-hat_t_of_powers}
Assume that $\alpha \in \IF_q\laurent{t}$ is not a $p^{\text{th}}$ power, but $p \mid v_t(\alpha) > 0$. Let $N \in \IN$ be not divisible by $p$. Then
\[
    \hat v_t (\alpha^N) = (N - 1)v_t(\alpha) + \hat v_t(\alpha).
\]
\end{lem}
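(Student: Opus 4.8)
The plan is to reduce the computation of $\hat v_t$ to an ordinary valuation computation. First I would record that $\hat v_t(x) = v_t(\pi(x))$, where $\pi \colon \IF_q\laurent{t} \to \IF_q\laurent{t}$ is the $\IF_q$-linear projection sending $\sum_i a_i t^i$ to $\sum_{p \nmid i} a_i t^i$, under the convention $v_t(0) = \infty$ that matches $\hat v_t(x) = \infty$ on $\IF_q\laurent{t^p}$. The task then becomes locating the lowest-degree surviving term of $\pi(\alpha^N)$. To this end I would split $\alpha = s + r$, where $s = \sum_{p \mid i} a_i t^i$ collects the $p$-divisible exponents and $r = \sum_{p \nmid i} a_i t^i = \pi(\alpha)$ collects the rest. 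By construction $s \in \IF_q\laurent{t^p}$ and $v_t(r) = \hat v_t(\alpha) = w$; writing $v = v_t(\alpha)$, the hypothesis $p \mid v$ forces the leading term $a_v t^v$ of $\alpha$ into $s$, so $v_t(s) = v$, and since $p \nmid w$ while $p \mid v$ we obtain $w > v$. That $\alpha$ is not a $p$th power guarantees $r \neq 0$, i.e. $w < \infty$.

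Next I would expand $\alpha^N = (s+r)^N = \sum_{j=0}^N \binom{N}{j} s^{N-j} r^j$ and apply $\pi$ term by term. The term $j = 0$ is $s^N \in \IF_q\laurent{t^p}$, which $\pi$ annihilates. The decisive observation concerns $j = 1$, the term $N s^{N-1} r$: since every exponent occurring in $s^{N-1}$ is divisible by $p$ and every exponent occurring in $r$ is not, every exponent in the product $s^{N-1} r$ has the shape $pa + b$ with $p \nmid b$ and is therefore itself not divisible by $p$. Hence $\pi$ fixes $s^{N-1} r$ entirely, this term has valuation exactly $(N-1)v + w$, and its leading coefficient is $N a_v^{N-1} a_w$, which is nonzero precisely because $p \nmid N$ (so $N \neq 0$ in $\IF_q$) and $a_v, a_w \neq 0$. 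This exhibits the claimed candidate value $(N-1)v + w$.

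It remains to verify that no other contribution reaches this degree, and this is the step I expect to be the main technical point. For $j \ge 2$ I would bound $v_t\!\big(\binom{N}{j} s^{N-j} r^j\big) \ge (N-j)v + jw = \big[(N-1)v + w\big] + (j-1)(w - v)$, and here the hypothesis $p \mid v_t(\alpha)$ pays off: it is exactly what makes $w - v > 0$, so every such term has valuation \emph{strictly} greater than $(N-1)v + w$. Together with the fact that $s^N$ contributes only $p$-divisible exponents, this shows the coefficient of $t^{(N-1)v+w}$ in $\pi(\alpha^N)$ comes solely from the $j=1$ term and is the nonzero scalar $N a_v^{N-1} a_w$, with no cancellation possible. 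Hence $\hat v_t(\alpha^N) = v_t(\pi(\alpha^N)) = (N-1)v + w = (N-1)v_t(\alpha) + \hat v_t(\alpha)$. The subtlety to watch is that the higher terms $r^j$ do in general contain non-$p$-divisible exponents of their own, so one cannot simply discard them through $\pi$; it is the valuation gap $(j-1)(w-v)$, furnished by $p \mid v_t(\alpha)$, that keeps them harmlessly above the leading term.
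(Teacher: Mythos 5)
Your proof is correct and takes essentially the same route as the paper: your decomposition $\alpha = s + r$ into $p$-divisible and non-$p$-divisible exponent parts is exactly the paper's $\alpha = \beta + \gamma$, and the binomial expansion in which the $j = 1$ term $N s^{N-1} r$ carries the unique minimal non-$p$-divisible exponent is precisely its argument. You simply spell out what the paper compresses into one sentence, namely that the coefficient $N a_v^{N-1} a_w$ is nonzero because $p \nmid N$, and that the terms with $j \ge 2$ sit strictly higher by the valuation gap $(j-1)(w - v) > 0$ furnished by $p \mid v_t(\alpha)$.
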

\begin{proof}
Decompose $\alpha$ as $\alpha = \beta + \gamma$, where $\beta \ne 0$ contains all monomials with exponent divisible by $p$ and $\gamma \ne 0$ contains all monomials with exponent not divisible by $p$. By our assumptions,
\[
    v_t(\beta) = v_t(\alpha) < \hat v_t(\alpha) = \hat v_t(\gamma).
\]
Considering the binomial theorem for $(\beta + \gamma)^N$, we observe that
\[
    \binom{N}{N - 1} \beta^{N - 1} \gamma
\]
must contain the monomial with the smallest exponent not divisible by $p$. Thus
\begin{equation*}
    \hat v_t(\alpha^N) = \hat v_t (N \beta^{N - 1} \gamma) = (N - 1)v_t(\beta) + \hat v_t(\gamma) = (N - 1)v_t(\alpha) + \hat v_t(\alpha). \qedhere
\end{equation*}
\end{proof}

\begin{lem} \label{lem:p_div_coding2}
Fix an element $\alpha \in \IF_q\laurent{t}$ with valuation $v_t(\alpha) = C > 0$ divisible by $p$. Assume additionally that $\alpha$ is not a $p^{\text{th}}$ power, so that $\hat v_t(\alpha^{-1}) = D \in \IZ$. Then for any choice of $N > 0$ satisfying
\[
    N > \frac D C + 1 \quad \text{and} \quad p \nmid N,
\]
we have
\[
    n \mid_p m \quad \text{if and only if} \quad m \ge n \wedge \exists a \in \IF_q\laurent{t}\ \alpha^{-mN} - \alpha^{-nN} = a^p - a
\]
for all $m,n > 0$.
\end{lem}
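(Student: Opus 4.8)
The plan is to mirror the two-part structure of the proof of Lemma~\ref{lem:p_div_coding}, replacing the naive $v_t$-argument by one using the function $\hat v_t$ from Definition~\ref{def:p_omit_val} exactly where it must break down (as Example~\ref{expl:counter} shows it does). \emph{The forward direction is untouched.} If $n \mid_p m$, say $m = np^k$, then $mN = (nN)p^k$, and the telescoping witness
\[
    a = \alpha^{-nN p^{k-1}} + \alpha^{-nN p^{k-2}} + \cdots + \alpha^{-nN}
\]
satisfies $a^p - a = \alpha^{-nN p^k} - \alpha^{-nN} = \alpha^{-mN} - \alpha^{-nN}$ by additivity of Frobenius. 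This uses nothing about $v_t(\alpha)$, so it carries over verbatim.

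\emph{The converse is where the work lies.} Assume $m \ge n$ and $\alpha^{-mN} - \alpha^{-nN} = a^p - a$, and write $m = m_0 p^s$, $n = n_0 p^r$ with $p \nmid m_0, n_0$. Since $m_0 \mid_p m$ and $n_0 \mid_p n$, the forward direction supplies $b,c$ with $\alpha^{-mN} - \alpha^{-m_0 N} = b^p - b$ and $\alpha^{-nN} - \alpha^{-n_0 N} = c^p - c$; setting $d = a - b + c$ and combining yields $\alpha^{-m_0 N} - \alpha^{-n_0 N} = d^p - d$. If $m_0 = n_0$, then $m/n = p^{s-r}$ with $s \ge r$ (as $m \ge n$), so $n \mid_p m$ and we are finished. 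It remains to rule out $m_0 \ne n_0$.

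\emph{Ruling out $m_0 \ne n_0$ is the crux}, and it is here that $p \mid v_t(\alpha)$ kills the old argument, since $v_t(d^p - d)$ is then divisible by $p$ no matter what. Instead I read off two incompatible quantities for $d$. Put $M = \max\{m_0, n_0\} \ge 2$. Comparing leading terms gives $v_t(d^p - d) = v_t(\alpha^{-MN}) = -MNC$, which forces $v_t(d) < 0$ and hence $v_t(d) = -MNC/p$ (an integer because $p \mid C$). On the other hand $d^p \in \IF_q\laurent{t^p}$ contributes no exponent prime to $p$, so $\hat v_t(d) = \hat v_t(d^p - d) = \hat v_t(\alpha^{-m_0 N} - \alpha^{-n_0 N})$. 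Running the computation of Lemma~\ref{lem:v-hat_t_of_powers} on $\alpha^{-1}$—which is legitimate since its proof only needs $v_t(\alpha^{-1}) = -C < D = \hat v_t(\alpha^{-1})$ together with $p \nmid MN$—gives $\hat v_t(\alpha^{-MN}) = -(MN-1)C + D$, and as the $M$-term dominates we get $\hat v_t(d) = -(MN-1)C + D$.

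\emph{The contradiction.} Since always $v_t(d) \le \hat v_t(d)$, we would need
\[
    -\frac{MNC}{p} \le -(MN-1)C + D.
\]
But the hypothesis $N > D/C + 1$ rearranges to $NC > C + D$, and from $M \ge 2$ and $\tfrac{p-1}{p} \ge \tfrac12$ we obtain $MNC \cdot \tfrac{p-1}{p} \ge NC > C + D$, which is exactly the strict reverse inequality $v_t(d) > \hat v_t(d)$. This contradiction forces $m_0 = n_0$, completing the proof. I expect the computation of $\hat v_t(\alpha^{-MN})$ to be the main obstacle: the entire argument turns on the gap between $\hat v_t$ and $v_t$ created by the $p$-th-power part of $\alpha$, and the bound $N > D/C + 1$ is calibrated precisely so that no choice of $d$ can counterfeit that gap.
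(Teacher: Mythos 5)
Your proof is correct and takes essentially the same route as the paper's: the same telescoping witness for the forward direction, the same reduction via $b,c$ and $d = a - b + c$ to $\alpha^{-m_0N} - \alpha^{-n_0N} = d^p - d$, and the same contradiction obtained by playing $v_t(d) = -MNC/p$ against $\hat v_t(d) = -(MN-1)C + D$ through $v_t \le \hat v_t$ and the calibration $N > D/C + 1$ (your rearrangement of the final inequality, and the use of $M = \max\{m_0,n_0\} \ge 2$ in place of the paper's WLOG $m_0 > n_0$, are purely cosmetic). Your explicit justification that Lemma~\ref{lem:v-hat_t_of_powers} applies to $\alpha^{-1}$ despite its negative valuation—since the proof only uses $v_t(\alpha^{-1}) < \hat v_t(\alpha^{-1})$ and $p \nmid MN$—is a small point of care the paper leaves implicit.
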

\begin{proof}
If $n \mid_p m$ holds, we essentially take the same witness $a \in \IF_q\laurent{t}$ as in Lemma \ref{lem:p_div_coding}. As for the converse, let us consider positive integers $m \ge n$ such that there exists $a \in \IF_q\laurent{t}$ with
\[
    \alpha^{-mN} - \alpha^{-nN} = a^p - a.
\]
By repeating the same steps as in the proof of Lemma \ref{lem:p_div_coding}, we can write $m = m_0 p^{v_p(m)}$, $n = n_0 p^{v_p(n)}$ and find $d \in \IF_q\laurent{t}$ such that
\begin{equation} \label{eq:contradiction}
    \alpha^{-m_0N} - \alpha^{-n_0N} = d^p - d.
\end{equation}
We are done if $m_0 = n_0$. So assume without loss of generality that $m_0 > n_0 \ge 1$. Instead of considering the $t$-adic valuation on both sides of (\ref{eq:contradiction}), we look at the $p^{\text{th}}$-powers-omitting $t$-adic valuation instead. By Lemma \ref{lem:v-hat_t_of_powers} and $p \nmid m_0N$, we observe
\begin{equation} \label{eq:val_LHS}
    \hat v_t(\alpha^{-m_0N} - \alpha^{-n_0N}) = -(m_0N - 1)C + D.
\end{equation}
If we evaluate the right-hand side of (\ref{eq:contradiction}), we get
\begin{equation} \label{eq:val_RHS}
    \hat v_t(d^p - d) = \hat v_t(d) \ge v_t(d).
\end{equation}
Since $v_t(d) < 0$, we can use
\[
    pv_t(d) = v_t(d^p - d) = v_t(\alpha^{-m_0N} - \alpha^{-n_0N}) = -m_0NC,
\]
together with (\ref{eq:contradiction}), (\ref{eq:val_LHS}), and (\ref{eq:val_RHS}), to deduce the inequality
\[
    -(m_0N - 1)C + D \ge \frac{-m_0NC}{p}.
\]
After rearranging, we have
\[
    N \le \frac{Cp + Dp}{m_0C(p - 1)} = \frac{C + D}{C} \frac{p}{m_0(p - 1)} \le \frac D C + 1,
\]
contradicting our choice of $N$. Hence $m_0 = n_0$.
\end{proof}

In Example \ref{expl:counter}, it would suffice to take $N = 2$.

By combining Lemma \ref{lem:p_div_coding} and Lemma \ref{lem:p_div_coding2}, we can complete our coding of $\mid_p$ inside $\IF_q\laurent{t}$.

\begin{prop} \label{cor:p_div_coding_final}
Fix an element $\alpha \in \IF_q\laurent{t}$ with valuation $v_t(\alpha) > 0$. Then there exists a parameter $N > 0$, depending on $\alpha$, such that
\[
    n \mid_p m \quad \text{if and only if} \quad m \ge n \wedge \exists a \in \IF_q\laurent{t}\ \alpha^{-mN} - \alpha^{-nN} = a^p - a
\]
holds for all $m,n > 0$.
\end{prop}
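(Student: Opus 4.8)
The plan is to reduce an arbitrary $\alpha$ with $v_t(\alpha) > 0$ to the situations already settled in Lemmas \ref{lem:p_div_coding} and \ref{lem:p_div_coding2}, where the base element is not a $p^{\text{th}}$ power, and then to transfer the resulting coding. The structural input is that, since $\IF_q$ is perfect, the $p^{\text{th}}$ powers in $\IF_q\laurent{t}$ are exactly the series in $\IF_q\laurent{t^p}$, that is, those all of whose exponents are divisible by $p$. First I would write $\alpha = \beta^{p^k}$ with $k \ge 0$ chosen maximal, so that $\beta$ is not a $p^{\text{th}}$ power. Such a factorisation exists and the exponent $k$ is finite: since $v_t(\alpha) > 0$, every exponent occurring in $\alpha$ is a positive integer and hence has finite $p$-adic valuation, and $k$ is bounded by the least of these valuations.

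Once $\beta$ is not a $p^{\text{th}}$ power, I would split into two cases according to whether $p \mid v_t(\beta)$. If $p \nmid v_t(\beta)$, then Lemma \ref{lem:p_div_coding} applies to $\beta$ and yields the coding with parameter $N = 1$; this is precisely the content of Remark \ref{rem:power_of_alpha}. If $p \mid v_t(\beta)$, then $\beta$ satisfies the hypotheses of Lemma \ref{lem:p_div_coding2} --- positive valuation divisible by $p$, and not a $p^{\text{th}}$ power --- so that lemma supplies a parameter $N$ coprime to $p$ with $N > \hat v_t(\beta^{-1})/v_t(\beta) + 1$ for which the coding holds for $\beta$. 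In either case I obtain an $N > 0$ with
\[
    n' \mid_p m' \iff m' \ge n' \wedge \exists a \in \IF_q\laurent{t}\ \beta^{-m'N} - \beta^{-n'N} = a^p - a
\]
for all $m', n' > 0$.

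Finally I would transfer this coding from $\beta$ to $\alpha = \beta^{p^k}$ using the same $N$, repeating the manipulation of Remark \ref{rem:power_of_alpha} in this marginally more general setting. Given $m, n > 0$, set $m' = m p^k$ and $n' = n p^k$; then $\alpha^{-mN} - \alpha^{-nN} = \beta^{-m'N} - \beta^{-n'N}$, while $m \ge n \iff m' \ge n'$ and $n \mid_p m \iff n' \mid_p m'$, the latter because $m p^k = n p^k \cdot p^j$ is equivalent to $m = n \cdot p^j$. Substituting these equivalences into the displayed coding for $\beta$ gives exactly the statement claimed for $\alpha$.

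The genuine content has already been carried out in the lemmas --- in particular the $\hat v_t$-computation of Lemma \ref{lem:p_div_coding2}, which rescues the cases that the naive coding of Lemma \ref{lem:p_div_coding} fails to capture, as in Example \ref{expl:counter}. The only points that require care here are the reduction to a non-$p^{\text{th}}$-power base and the observation that the very same $N$ produced by Lemma \ref{lem:p_div_coding2}, rather than some modified parameter, survives the passage from $\beta$ to $\beta^{p^k}$; both become routine once the description of the $p^{\text{th}}$ powers is in hand.
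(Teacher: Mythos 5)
Your proposal is correct and takes essentially the same approach as the paper: write $\alpha = \beta^{p^k}$ with $\beta$ not a $p^{\text{th}}$ power, split into cases according to whether $p$ divides $v_t(\beta)$, invoke Lemma \ref{lem:p_div_coding} (with $N = 1$) or Lemma \ref{lem:p_div_coding2} accordingly, and transfer the coding from $\beta$ back to $\alpha$ as in Remark \ref{rem:power_of_alpha}. You merely make explicit two details the paper leaves implicit --- the existence and finiteness of the exponent $k$ in the decomposition, and the fact that the substitution $m' = mp^k$, $n' = np^k$ carries the coding over with the \emph{same} parameter $N$ --- both of which are accurate.
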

\begin{proof}
Write $\alpha = \beta^{p^k}$, $k \in \IN$, such that $\beta$ is not a $p^{\text{th}}$ power in $\IF_q\laurent{t}$. We consider two cases:

\vspace{0.2em}
\textit{Case 1. $p$ does not divide $v_t(\beta)$.} By Lemma \ref{lem:p_div_coding} and Remark \ref{rem:power_of_alpha}, we can choose $N = 1$.

\vspace{0.2em}
\textit{Case 2. $p$ divides $v_t(\beta)$.} By Lemma \ref{lem:p_div_coding2} and Remark \ref{rem:power_of_alpha}, we can choose $N$ to be the smallest natural number not divisible by $p$ bigger than $\hat v_t(\beta^{-1})/v_t(\beta) + 1$.
\end{proof}

From this, we conclude our main theorem.

\begin{thm}
Let $\alpha \in \IF_q\laurent{t}$ be an element with $v_t(\alpha) > 0$. Then the existential theory of the structure $(\IF_q\laurent{t},\alpha,\alpha^{\IZ})$ is undecidable.
\end{thm}
\begin{proof}
First, we identify $\{\alpha^n \mid n > 0\}$ in this structure. This set is given by $\alpha^{\IZ} \cap \IF_q\llbracket t \rrbracket \setminus \{1\}$. In \cite{Anscombe-Koenigsmann}, Anscombe and Koenigsmann show that $\IF_q\llbracket t \rrbracket$ is existentially $\Ll_{\ring}$-definable in $\IF_q\laurent{t}$ without parameters, so the same is true of $\{\alpha^n \mid n > 0\}$ inside $(\IF_q\laurent{t},\alpha,\alpha^{\IZ})$.
By Proposition \ref{cor:p_div_coding_final}, we can interpret $(\IN, 0, 1, +, \mid_p)$ in $(\IF_q\laurent{t},0,1,+,\cdot,\alpha,\alpha^{\IZ})$ using existential formulas. By \ref{idea:1}, $\Th_{\exists}(\IN,0,1,+,\mid_p)$ is undecidable, so $\Th_{\exists}(\IF_q\laurent{t},\alpha,\alpha^{\IZ})$ must also be undecidable.
\end{proof}

\begin{rem} \label{rem:generalise}
Pheidas formulates his theorem in slightly more general terms: for any integral domain $F$ of characteristic $p$, quotient field $K$ of $F$, and intermediate ring $F[t] \subseteq R \subseteq K\laurent{t}$, the existential theory $\Th_{\exists}(R,t,P)$ is undecidable. The same is true of our result: as long as $\alpha \in R$, we have that $\Th_{\exists}(R,\alpha,\{\alpha^n \mid n > 0\})$ is undecidable (essentially by the same proof).
\end{rem}

\begin{rem}
More recently, an adaption of Pheidas' theorem via the so-called Krasner-Kazhdan-Deligne philosophy was obtained by Kartas \cite{Kartas}, who shows that the asymptotic theory of all $p$-adic fields is undecidable in the language of rings with a cross-section. We hope to further adapt these types of results to infinitely ramified valued fields.    
\end{rem}

\paragraph{Acknowledgements.}

First and foremost, I would like to express my gratitude to Philipp Hieronymi who advised this work in 2022.
Moreover, I would like to thank Philip Dittmann, Konstantinos Kartas, and Jochen Koenigsmann for valuable comments that helped improve the exposition of this note.

\label{sect:bib}
\bibliographystyle{plain}
\bibliography{main}

\end{document}